\numberwithin{equation}{section}
 \def\Hom{\mbox{\rm Hom}}  \def\Infl{\mbox{\rm Infl}\,}\def\Defl{\mbox{\rm Defl}\,}\def\Cone{\mbox{\rm Cone}\,}
\def\A{\mathcal{A}\,}
\theoremstyle{plain} %text of this environment is typesetted in italics
\newtheorem{theorem}{\bf Theorem}[section]
\newtheorem{lemma}[theorem]{\bf Lemma}
\newtheorem{corollary}[theorem]{\bf Corollary}
\theoremstyle{definition} %text of this environment is typesetted in roman letters
\newtheorem{definition}[theorem]{\bf Definition}
\newtheorem{remark}[theorem]{\bf Remark}
\newtheorem{example}[theorem]{\bf Example}
\newcommand{\bt}{\begin{theorem}}
\newcommand{\et}{\end{theorem}}
\newcommand{\bl}{\begin{lemma}}
\newcommand{\el}{\end{lemma}}
\newcommand{\bd}{\begin{definition}}
\newcommand{\ed}{\end{definition}}
\newcommand{\bc}{\begin{corollary}}
\newcommand{\ec}{\end{corollary}}
\newcommand{\bp}{\begin{proof}}
\newcommand{\ep}{\end{proof}}
\newcommand{\bx}{\begin{example}}
\newcommand{\ex}{\end{example}}
\newcommand{\br}{\begin{remark}}
\newcommand{\er}{\end{remark}}
\newcommand{\be}{\begin{equation}}
\newcommand{\ee}{\end{equation}}
\newcommand{\ba}{\begin{align}}
\newcommand{\ea}{\end{align}}
\newcommand{\bn}{\begin{enumerate}}
\newcommand{\en}{\end{enumerate}}
\newcommand{\bcs}{\begin{cases}}
\newcommand{\ecs}{\end{cases}}
\renewcommand{\section}{\@startsection{section}{1}{0mm}
  {-\baselineskip}{0.5\baselineskip}{\bf\leftline}}
\begin{document}

\title[Extriangulated factorization systems, $s$-torsion pairs and recollements]{Extriangulated factorization systems,\\ $s$-torsion pairs and recollements}

\author[Y. Xu, H. Zhang, Z. Zhu]{Yan Xu, Haicheng Zhang, Zhiwei Zhu}

\address{Ministry of Education Key Laboratory of NSLSCS, School of Mathematical Sciences, Nanjing Normal University,
Nanjing 210023, P.R.~China}
\email{swgfeng@outlook.com (Xu)}

\email{zhanghc@njnu.edu.cn (Zhang)}

\email{1985933219@qq.com (Zhu)}

%%%%%%%%%%%%%%% footnote %%%%%%%%%%%%%%%%
%\subjclass[2010]{17B37, 16G20, 17B20.}

\keywords{Extriangulated categories; $s$-torsion pairs; factorization systems; recollements.}
%\thanks{$*$~Corresponding author.}
%%%%%%%%%%%% Authors addresses %%%%%%%%%%%%%

%%%%%%%%%%%%%%%%%%%%%%%%%%%%%%%%%%%%%%%%%

\begin{abstract}
 We introduce extriangulated factorization systems in extriangulated categories and show that there exists a bijection between $s$-torsion pairs and extriangulated factorization systems. We also consider the gluing of $s$-torsion pairs and extriangulated factorization systems under recollements of extriangulated categories.
\end{abstract}

\maketitle

\section{Introduction}
 Extriangulated categories were introduced by Nakaoka and Palu \cite{8}, which can be viewed as a simultaneous generalisation of triangulated categories
and exact categories. Exact categories and extension-closed subcategories
 of triangulated categories are extriangulated categories, and there are also some examples
 of extriangulated categories which may be neither exact categories
 nor triangulated categories. Extriangulated categories provide a unified framework for some research topics in exact
 categories and triangulated categories.

Factorization systems play a significant role in modern category theory, primarily due to their ability to equip categories with sophisticated structural properties.
Formally, a classical factorization system in a category $\mathscr{C}$ consists of two classes $\mathcal{E}$ and $\mathcal{M}$ of morphisms satisfying the factorization axiom and orthogonality condition (c.f. \cite{9,13}).
We recall that a morphism $f$ in $\mathscr{C}$ is left orthogonal to another morphism $g$ (or $g$ is right orthogonal to $f$) if for any commutative square of solid arrows
$$\xymatrix{
A \ar[d]_{f} \ar[r] &C \ar[d]^{g}\\
B \ar[r]   \ar@{-->}[ur]^{d}        &D
}$$
there is a unique morphism $d$ such that the two above triangles commute. If such $d$ exists, but may not be unique, one calls it weak orthogonality.
The weak factorization systems can be defined by the factorization axiom and weak orthogonality condition. A class $\mathcal{X}$ of morphisms of the category $\mathscr{C}$ is said to have the {\em 3-for-2 property}, it means that for any two composable morphisms $\xymatrix{\cdot\ar[r]^-{g}&\cdot\ar[r]^-{f}&\cdot}$ in $\mathscr{C}$, if two of the morphisms $f,g,f\circ g$ belong to $\mathcal{X}$, so does the third. A factorization system $(\mathcal{E},\mathcal{M})$ is called a {\em torsion theory} if both $\mathcal{E}$ and $\mathcal{M}$ have the 3-for-2 property. Given a torsion theory $(\mathcal{E},\mathcal{M})$, set
\[
0/\mathcal{E} := \left\{ X \in \mathscr{C} \mid
\scalebox{0.6}{$\begin{bmatrix} 0 \\ \downarrow \\X \end{bmatrix}$} \in \mathcal{E} \right\}
\]
and
\[
\mathcal{M}/0 := \left\{ X \in \mathscr{C} \mid
\scalebox{0.6}{$\begin{bmatrix} X \\ \downarrow \\0 \end{bmatrix}$} \in \mathcal{M} \right\}.
\]

For a torsion theory $(\mathcal{E},\mathcal{M})$ in an abelian category $\mathscr{C}$,  it is called {\em normal} if for any object $X\in\mathscr{C}$, taking the factorization
$$\xymatrix{X\ar[r]^-{e}& X'\ar[r]^-m&0}$$
of the morphism $X\rightarrow 0$ and the pullback of $e$ along the morphism $0\rightarrow X'$
$$\xymatrix{T\ar[r]\ar[d]&X\ar[d]^-e\\0\ar[r]&X'}$$
we have that $T\rightarrow 0$ belongs to $\mathcal{E}$ (c.f. \cite{13}).
Rosick\'{y} and Tholen \cite{9} showed that the following assignment
$$(\mathcal{E},\mathcal{M})\longmapsto (0/\mathcal{E}, \mathcal{M}/0)$$
gives a bijection between classical torsion pairs and normal torsion theories.

Switching to triangulated categories, the role played by classical torsion pairs in abelian categories is played by $t$-structures. Loregian and Virili  \cite{13} introduced triangulated factorization systems, and then generalised the above bijection to a bijection between $t$-structures and triangulated factorization systems.
In order to provide a general framework for the studies of $t$-structures in triangulated categories and torsion
pairs in abelian categories, Adachi, Enomoto and Tsukamoto \cite{2} introduced the notions of $s$-torsion pairs in extriangulated categories with negative first extensions.
Moreover, when such extriangulated categories are triangulated, they provided a bijection between $s$-torsion pairs and $t$-structures.

Recollements of triangulated categories were introduced by Be{\u\i}linson,
Bernstein and Deligne in \cite{bd}. The recollements of abelian categories first appeared in the construction of the
category of perverse sheaves on a singular space in \cite{mv}. Recently, Wang, Wei, and Zhang \cite{16} defined recollements of extriangulated categories, which provide a unified generalization of recollements of abelian categories and recollements of triangulated categories.

The main aim of this paper is to provide a unified framework for the bijections in \cite{9} and \cite{13} by defining extriangulated factorization systems and applying $s$-torsion pairs.
The paper is organized as follows: we summarize some basic definitions and properties of extriangulated categories, and give the definitions of $s$-torsion pairs and extriangulated factorization systems in Section 2. In Section 3, we establish a bijection between $s$-torsion pairs and extriangulated factorization systems. Section 4 is devoted to gluing $s$-torsion pairs and extriangulated factorization systems under recollements of extriangulated categories.

Throughout the paper, we assume that all categories are additive categories, all subcategories are full and closed under isomorphisms. For a category $\mathscr{C}$, we denote by $\mathscr{C}(X,Y)$ the set of morphisms from $X$ to $Y$ in $\mathscr{C}$. For a finite-dimensional algebra $A$, we denote by $K^b({\rm proj}A)$ the homotopy category of bounded complexes over projective $A$-modules, and by $D^b({\rm mod}A)$ the bounded derived category of $A$.

\section{Preliminaries}

\subsection{Extriangulated categories}
Let us recall some notions concerning extriangulated categories from \cite{8}.
Let $\mathbb{E}:\mathscr{C}^{op}\times\mathscr{C}\rightarrow Ab$ be a biadditive functor, where $Ab$ is the category of abelian groups. For any pair of objects $A,C\in\mathscr{C}$, an element $\delta\in\mathbb{E}(C,A)$ is called an
$\mathbb{E}$-{\em extension}. The zero element $0\in\mathbb{E}(C,A)$ is called the {\em split} $\mathbb{E}$-{\em extension}. For any morphism $a\in\mathscr{C}(A,A')$ and $c\in\mathscr{C}(C',C)$, we have the following $\mathbb{E}$-extensions
$$\mathbb{E}(C,a)(\delta)\in\mathbb{E}(C,A')~\text{and}~\mathbb{E}(c,A)(\delta)\in\mathbb{E}(C',A),$$
which are denoted by $a_*\delta$ and $c^*\delta$, respectively.
A morphism $(a,c):\delta\rightarrow \delta'$ of $\mathbb{E}$-extensions $\delta\in\mathbb{E}(C,A)$ and $\delta'\in\mathbb{E}(C',A')$ is a pair of morphisms $a\in\mathscr{C}(A,A')$ and $c\in\mathscr{C}(C,C')$ such that $a_*\delta=c^*\delta'$.
Two sequences of morphisms $A\stackrel{x}{\longrightarrow}B\stackrel{y}{\longrightarrow}C$ and $A\stackrel{x'}{\longrightarrow}B'\stackrel{y'}{\longrightarrow}C$ in $\mathscr{C}$ are said to be {\em equivalent} if there exists an isomorphism $b\in\mathscr{C}(B,B')$ such that the following diagram is commutative
$$\xymatrix{A \ar[r]^x \ar@{=}[d]& B\ar[r]^y \ar[d]^b_{\simeq}&C\ar@{=}[d]\\
A\ar[r]^{x'}&B'\ar[r]^{y'}&C.}$$
We denote the equivalence class of $A\stackrel{x}{\longrightarrow}B\stackrel{y}{\longrightarrow}C$ by $[A\stackrel{x}{\longrightarrow}B\stackrel{y}{\longrightarrow}C]$, and for any $A,C\in\mathscr{C}$, denote as
$$0=[\xymatrix{A\ar[r]^{\tiny{\begin{pmatrix}1\\0\end{pmatrix}}\quad\quad}&A\oplus B\ar[r]^{\quad\tiny{\begin{pmatrix}0 & 1\end{pmatrix}}}&C}].
$$

\begin{definition}
  \cite[Definition 2.9]{8} Let $\mathfrak{s}$ be a correspondence, which associates an equivalence class $\mathfrak{s}(\delta)=[A\stackrel{x}{\longrightarrow}B\stackrel{y}{\longrightarrow}C]$ to each $\mathbb{E}$-extension $\delta\in\mathbb{E}(C,A)$. This $\mathfrak{s}$ is called a {\em realization} of $\mathbb{E}$ if for any morphism $(a,c):\delta\rightarrow \delta'$ with $\mathfrak{s}(\delta)=[A\stackrel{x}{\longrightarrow}B\stackrel{y}{\longrightarrow}C]$ and $\mathfrak{s}(\delta')=[A'\stackrel{x'}{\longrightarrow}B'\stackrel{y'}{\longrightarrow}C']$, there is a commutative diagram as follows:
  $$\xymatrix{A \ar[r]^x \ar[d]^a& B\ar[r]^y \ar[d]^b&C\ar[d]^c\\
  A'\ar[r]^{x'}&B'\ar[r]^{y'}&C'.}
  $$
  A realization $\mathfrak{s}$ of $\mathbb{E}$ is said to be {\em additive} if the following conditions are satisfied:

  \;(a) For any $A,C\in\mathscr{C}$, the split $\mathbb{E}$-extension $0\in\mathbb{E}(C,A)$ satisfies $\mathfrak{s}(0)=0$.

  \;(b) $\mathfrak{s}(\delta\oplus \delta')=\mathfrak{s}(\delta)\oplus\mathfrak{s}(\delta')$ for any pair of $\mathbb{E}$-extensions $\delta$ and $\delta'$.
\end{definition}

An extriangulated category is a triple $(\mathscr{C},\mathbb{E},\mathfrak{s})$ consisting of the following data satisfying
certain axioms:

$(1)$ $\mathscr{C}$ is an additive category and $\mathbb{E}:\mathscr{C}^{op}\times\mathscr{C}\rightarrow Ab$ is a biadditive functor.

$(2)$ $\mathfrak{s}$ is an additive realization of $\mathbb{E}$, which defines the class of conflations satisfying the
axioms (ET1)-(ET4), (ET3)$^{\rm op}$ and (ET4)$^{\rm op}$ (see \cite[Definition 2.12]{8} for details).

A sequence $A\stackrel{x}{\longrightarrow}B\stackrel{y}{\longrightarrow}C$ is called a {\em conflation} if it realizes some $\mathbb{E}$-extension $\delta\in\mathbb{E}(C,A)$. Then $x$ is called an {\em inflation} and $y$ is called a {\em deflation}, and $A\stackrel{x}{\longrightarrow}B\stackrel{y}{\longrightarrow}C\stackrel{\delta}{\dashrightarrow}$ is called an $\mathbb{E}$-{\em triangle}.
For an $\mathbb{E}$-triangle $A\stackrel{x}{\longrightarrow}B\stackrel{y}{\longrightarrow}C\stackrel{\delta}{\dashrightarrow}$, we denote $A={\rm cocone}(y)$ and $C={\rm cone}(x)$. An $\mathbb{E}$-triangle is {\em split} if it realizes $0$.

A subcategory $\mathcal{U}$ of an extriangulated category $\mathscr{C}$ is {\em closed under extensions} if for any conflation $A\stackrel{x}{\longrightarrow}B\stackrel{y}{\longrightarrow}C$ with $A,C\in\mathcal{U}$, we have $B\in\mathcal{U}$.
An object $P$ in $\mathscr{C}$ is called {\em projective} if for any conflation $A\stackrel{x}{\longrightarrow}B\stackrel{y}{\longrightarrow}C$ and any morphism $c\in\mathscr{C}(P,C)$, there exists a morphism $b\in\mathscr{C}(P,B)$ such that $yb=c$. The notion of {\em injective} objects can be defined dually. We denote by $\mathcal{P}(\mathscr{C})$ the full subcategory of projective objects in $\mathscr{C}$.

Let $\mathscr{C}$ be an extriangulated category. For any subcategories $\mathcal{T}, \mathcal{F}$ of $\mathscr{C}$, denote by $\mathcal{T} * \mathcal{F}$ the subcategory of $\mathscr{C}$ which consists of the objects $X$ admitting an $\mathbb{E}$-triangle $T{\longrightarrow}X{\longrightarrow}F{\dashrightarrow}$ with $T\in\mathcal{T}$ and $F\in\mathcal{F}$. For any subcategory $\mathcal{U}$, define
$$\Infl\mathcal{U}=\{f~\text{is~an~inflation}\;|\;{\rm cone}(f)\in\mathcal{U}\}$$
and
   $$\Defl\mathcal{U}=\{f~\text{is~a~deflation}\;|\;{\rm cocone}(f)\in\mathcal{U}\}.$$
   Given a class $\mathcal{I}$ of inflations in $\mathscr{C}$, set
\begin{align*}
{\rm Cone}\,\mathcal{I}=\{U\in\mathscr{C}\ |\ U\cong \operatorname{cone}(f)\ \textrm{for~some~inflation}\ f\in\mathcal{I}\}.
\end{align*}
Given a class $\mathcal{J}$ of deflations in $\mathscr{C}$, set
\begin{align*}
{\rm Cocone}\,\mathcal{J}=\{U\in\mathscr{C}\ |\ U\cong \operatorname{cocone}(f)\ \textrm{for~some~deflation}\ f\in\mathcal{J}\}.
\end{align*}

\subsection{$s$-torsion pairs in extriangulated categories}
In order to give the definition of $s$-torsion pairs, we need to recall negative first extensions
in extriangulated categories.
\begin{definition}\cite[Definition 2.3]{2}
  An extriangulated category $\mathscr{C}$ is called an {\em extriangulated category with negative first extensions}, if $\mathscr{C}$ has a negative first extension structure which consists of the following data:
  \begin{enumerate}
    \item $\mathbb{E}^{-1}:\mathscr{C}^{\mathrm{op}} \times \mathscr{C} \rightarrow Ab$ is an additive bifunctor.
    \item For each \(\delta \in \mathbb{E}(C, A)\), there exist two natural transformations
\[\delta_{\sharp }^{-1} : \mathbb{E}^{-1}(-, C) \to \mathscr{C}(-, A),\]
\[\delta_{-1}^{\sharp} : \mathbb{E}^{-1}(A, -) \to \mathscr{C}(C, -)\]
  \end{enumerate}
such that for each  $\mathbb{E}$-triangle $A\stackrel{f}{\longrightarrow}B\stackrel{g}{\longrightarrow}C\stackrel{\delta}{\dashrightarrow}$
and $W \in \mathscr{C}$, we have the following two exact sequences
\begin{align*}
&\mathbb{E}^{-1}(W,A)
\xrightarrow{\mathbb{E}^{-1}(W,f)}
\mathbb{E}^{-1}(W,B)
\xrightarrow{\mathbb{E}^{-1}(W,g)}
\mathbb{E}^{-1}(W,C)
\xrightarrow{(\delta_{\sharp}^{-1})_W}
\mathscr{C}(W,A)
\xrightarrow{\mathscr{C}(W,f)}
\mathscr{C}(W,B),\\
&\mathbb{E}^{-1}(C,W)
\xrightarrow{\mathbb{E}^{-1}(g,W)}
\mathbb{E}^{-1}(B,W)
\xrightarrow{\mathbb{E}^{-1}(f,W)}
\mathbb{E}^{-1}(A,W)
\xrightarrow{(\delta_{-1}^{\sharp})_W}
\mathscr{C}(C,W)
\xrightarrow{\mathscr{C}(g,W)}
\mathscr{C}(B,W).
\end{align*}
\end{definition}
We can naturally regard triangulated categories and exact
categories as extriangulated categories with negative first extensions (c.f.\cite[Example 2.4]{2}).

\begin{definition}\cite[Definition 3.1]{2}
Let $\mathscr{C}$ be an extriangulated category with negative first extensions. A pair $(\mathcal{T}, \mathcal{F})$ of subcategories of $\mathscr{C}$ is called an \emph{s-torsion pair} in $\mathscr{C}$ if it satisfies the following conditions:

$(1)$~$\mathscr{C}= \mathcal{T} * \mathcal{F}$.\quad  $(2)$~$\mathscr{C}(\mathcal{T}, \mathcal{F}) = 0$.\quad $(3)$ $\mathbb{E}^{-1}(\mathcal{T}, \mathcal{F}) = 0$.
\end{definition}

\subsection{Extriangulated factorization systems}
In what follows, we always assume, unless otherwise stated, that $\mathscr{C}$ is an extriangulated category with negative first extensions.
In this subsection, we define certain factorization systems in $\mathscr{C}$.
\begin{definition}
Given two inflations $l: A \to B$ and $r: C \to D$ in $\mathscr{C}$, we say that {\em $l$ is left orthogonal to $r$} (or {\em $r$ is right orthogonal to $l$}), denoted by $l \bot r$, if $$\mathscr{C}(\text{\rm{cone}}(l),\text{\rm{cone}}(r))=0~\text{and}~\mathbb{E}^{-1}(\text{\rm{cone}}(l),\text{\rm{cone}}(r))=0.$$

For a class $\mathcal{I}$ of inflations in $\mathscr{C}$, define
 \[ \mathcal{I}^\perp = \left\{f~\text{is~an~inflation}~|~g \perp f~\text{for~any~inflation}~g\in \mathcal{I}\right\}\]
 and
\[ {}^ \perp \mathcal{I}= \left\{ f~\text{is~an~inflation}~|~f \perp g~\text{for~any~inflation}~g\in \mathcal{I}\right\}.\]
\end{definition}

\begin{definition}
Let $\mathcal{L}$ and $\mathcal{R}$ be two classes of inflations in $\mathscr{C}$. We call $\left( {\mathcal{L},\mathcal{R}} \right)$ an {\em inflation factorization system} if the following conditions are satisfied:
\begin{enumerate}
  \item For each inflation $f$ in $\mathscr{C}$, there is a factorization $f=rl$ with $l \in \mathcal{L}$ and $r \in \mathcal{R}$.
  \item $\mathcal{L}={}^ \perp \mathcal{R}$.
  \item $\mathcal{L}^\perp=\mathcal{R}$.
\end{enumerate}
The {\em orthogonality of deflations} and {\em deflation factorization systems} can be dually defined. The inflation factorization systems and deflation factorization systems are collectively referred to as {\em extriangulated factorization systems}.
\end{definition}

\begin{remark}
$(1)$~If $\mathscr{C}$ is an abelian category, for any two inflations $l: A \to B$ and $r: C \to D$ in $\mathscr{C}$, they are both monomorphisms. In this case, $l \bot r$ if and only if $$\mathscr{C}({\rm Coker}(l),{\rm Coker}(r))=0.$$ Then for any commutative top square of solid arrows,
$$\xymatrix@C=2pc{
A \ar[d]_{l} \ar[r] &C \ar[d]^{r}\\
B \ar[d] \ar[r]   \ar@{-->}[ur]^{d}        &D \ar[d]\\
\text{\rm{Coker}}(l) \ar[r]^{0} &\text{\rm{Coker}}(r)
}$$
the zero morphism is the unique morphism such that the bottom square is commutative, if and only if
there exists a unique morphism $d$ such that the two triangles in the above commutative top square commute. Hence, the orthogonality defined here implies the orthogonality in classical factorization systems, and so do the orthogonality of deflations.

$(2)$~If $\mathscr{C}$ is a triangulated category, any morphism in $\mathscr{C}$ is an inflation, and for any morphisms $l$ and $r$, if $l \bot r$, then $l$ is left homotopy orthogonal to $r$ in the sense of \cite[Definition 1.1]{13}.
\end{remark}

\section{Bijections between $s$-torsion pairs and factorization systems}
In this section, we establish a bijection between $s$-torsion pairs and extriangulated factorization systems.
\begin{theorem}\label{main}
Let $\mathscr{C}$ be an extriangulated category with negative first extensions. Then the assignments
\[
(\mathcal{T}, \mathcal{F}) \mapsto (\Infl\mathcal{T},\Infl\mathcal{F}) \quad \text{and} \quad (\mathcal{L}, \mathcal{R}) \mapsto ({\rm Cone}(\mathcal{L}), {\rm Cone}(\mathcal{R}))
\]
give the mutually inverse bijections between the following classes:
\begin{enumerate}
  \item $s$-torsion pairs in $\mathscr{C}$;
  \item Inflation factorization systems in $\mathscr{C}$.
\end{enumerate}
\end{theorem}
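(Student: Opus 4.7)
The plan is to check three things: (i) $(\Infl\mathcal{T},\Infl\mathcal{F})$ is an inflation factorization system whenever $(\mathcal{T},\mathcal{F})$ is an $s$-torsion pair; (ii) $({\rm Cone}\,\mathcal{L},{\rm Cone}\,\mathcal{R})$ is an $s$-torsion pair whenever $(\mathcal{L},\mathcal{R})$ is an inflation factorization system; and (iii) the two constructions are mutually inverse. Throughout I shall use two standard facts about extriangulated categories: for any $U\in\mathscr{C}$ the split $\mathbb{E}$-triangle $0\to U\to U\dashrightarrow$ witnesses $U=\operatorname{cone}(0\to U)$; and, by (ET4)$^{\rm op}$, a composition of two inflations $X\xrightarrow{l}Z\xrightarrow{r}Y$ sits in an $\mathbb{E}$-triangle $\operatorname{cone}(l)\to\operatorname{cone}(rl)\to\operatorname{cone}(r)\dashrightarrow$.

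For (i), the orthogonality $\Infl\mathcal{T}\perp\Infl\mathcal{F}$ is immediate from $\mathscr{C}(\mathcal{T},\mathcal{F})=0$ and $\mathbb{E}^{-1}(\mathcal{T},\mathcal{F})=0$. For the factorization axiom, take an inflation $f:X\to Y$ with cone $C$, decompose $C$ via $\mathscr{C}=\mathcal{T}*\mathcal{F}$ into an $\mathbb{E}$-triangle $T\to C\to F\dashrightarrow$, and apply (ET4)$^{\rm op}$ to the composable deflations $Y\to C\to F$ to produce a factorization $X\xrightarrow{l}Z\xrightarrow{r}Y$ of $f$ with $\operatorname{cone}(l)\cong T\in\mathcal{T}$ and $\operatorname{cone}(r)\cong F\in\mathcal{F}$; hence $l\in\Infl\mathcal{T}$ and $r\in\Infl\mathcal{F}$. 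The identifications $\Infl\mathcal{T}={}^{\perp}\Infl\mathcal{F}$ and $(\Infl\mathcal{T})^{\perp}=\Infl\mathcal{F}$ are the technical core and are addressed in the last paragraph.

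For (ii), for any $X\in\mathscr{C}$ the morphism $0\to X$ is an inflation, which factors as $0\to T\xrightarrow{r} X$ with $0\to T\in\mathcal{L}$ and $r\in\mathcal{R}$; then $T=\operatorname{cone}(0\to T)\in{\rm Cone}\,\mathcal{L}$ and $F:=\operatorname{cone}(r)\in{\rm Cone}\,\mathcal{R}$, and the $\mathbb{E}$-triangle $T\to X\to F\dashrightarrow$ witnesses $\mathscr{C}={\rm Cone}\,\mathcal{L}*{\rm Cone}\,\mathcal{R}$; the required Hom- and $\mathbb{E}^{-1}$-vanishings follow at once from $\mathcal{L}={}^{\perp}\mathcal{R}$. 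For (iii), each round-trip is a direct verification using closure of the classes under isomorphism: any $T\in\mathcal{T}$ is $\operatorname{cone}(0\to T)\in{\rm Cone}(\Infl\mathcal{T})$ and any $U\cong\operatorname{cone}(l)$ with $l\in\Infl\mathcal{T}$ already lies in $\mathcal{T}$; conversely, an inflation $f$ with $\operatorname{cone}(f)\cong\operatorname{cone}(l)$ for some $l\in\mathcal{L}$ satisfies $f\perp g\Leftrightarrow l\perp g$ for every $g\in\mathcal{R}$, so $f\in{}^{\perp}\mathcal{R}=\mathcal{L}$.

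The main obstacle is the inclusion ${}^{\perp}\Infl\mathcal{F}\subseteq\Infl\mathcal{T}$ in step (i). Given $f\in{}^{\perp}\Infl\mathcal{F}$, factor $f=rl$ as in (i) to obtain an $\mathbb{E}$-triangle $T\to C\to F\dashrightarrow$ with $C=\operatorname{cone}(f)$, $T\in\mathcal{T}$, $F\in\mathcal{F}$. The hypothesis $f\perp r$ gives $\mathscr{C}(C,F)=0$ and $\mathbb{E}^{-1}(C,F)=0$, so the long exact sequence
\[
\mathbb{E}^{-1}(C,F)\to\mathbb{E}^{-1}(T,F)\to\mathscr{C}(F,F)\to\mathscr{C}(C,F)
\]
obtained by applying $\mathscr{C}(-,F)$ to $T\to C\to F\dashrightarrow$ collapses to an isomorphism $\mathbb{E}^{-1}(T,F)\cong\mathscr{C}(F,F)$; but $\mathbb{E}^{-1}(T,F)=0$ by the $s$-torsion pair, so ${\rm id}_{F}=0$, forcing $F=0$ and $C\cong T\in\mathcal{T}$. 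This is precisely the step in which the negative first extension is indispensable: without $\mathbb{E}^{-1}$ one can only conclude that $T\to C$ is a split epimorphism, not that $F$ vanishes. The identification $(\Infl\mathcal{T})^{\perp}=\Infl\mathcal{F}$ is obtained by an entirely symmetric argument using the dual exact sequence with $\mathscr{C}(T,-)$ and the vanishing $\mathbb{E}^{-1}(T,F)=0$.
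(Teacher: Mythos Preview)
Your proof is correct and follows the same overall architecture as the paper's: the factorization of an arbitrary inflation via (ET4)$^{\rm op}$ applied to the torsion decomposition of its cone, the reduction of $\mathscr{C}={\rm Cone}\,\mathcal{L}*{\rm Cone}\,\mathcal{R}$ to the factorization of $0\to X$, and the round-trip identities all match the paper's arguments verbatim.

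The one point of divergence is precisely the step you flagged as the technical core, the inclusion ${}^{\perp}\Infl\mathcal{F}\subseteq\Infl\mathcal{T}$. The paper handles it more cheaply: given $h\in{}^{\perp}\Infl\mathcal{F}$, it tests $h$ only against the inflations $0\to F$ for $F\in\mathcal{F}$ to obtain $\mathscr{C}({\rm cone}(h),\mathcal{F})=0$, and then invokes \cite[Proposition~3.2]{2}, which identifies $\mathcal{T}$ with $\{X\in\mathscr{C}\mid\mathscr{C}(X,\mathcal{F})=0\}$ for any $s$-torsion pair. You instead factor $f=rl$, use $f\perp r$, and run the long exact sequence on $T\to C\to F\dashrightarrow$ to force $\mathscr{C}(F,F)=0$. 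Both arguments are valid; yours is self-contained and makes transparent exactly where $\mathbb{E}^{-1}(\mathcal{T},\mathcal{F})=0$ is used, while the paper's is shorter because that work is hidden inside the cited proposition (whose proof is essentially the same long exact sequence computation). The dual inclusion $(\Infl\mathcal{T})^{\perp}\subseteq\Infl\mathcal{F}$ is treated analogously in both.
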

\begin{proof}
Let $f:X\rightarrow Y$ be any inflation in $\mathscr{C}$, and
let $X\stackrel{f}{\longrightarrow}Y{\longrightarrow}C{\dashrightarrow}$ be an $\mathbb{E}$-triangle. Suppose that $(\mathcal{T}, \mathcal{F})$ is an $s$-torsion pair in $\mathscr{C}$, then there is an $\mathbb{E}$-triangle $T{\longrightarrow}C{\longrightarrow}F{\dashrightarrow}$ with $T \in \mathcal{T}$ and  $F \in \mathcal{F}$. By $(\rm{ET4})^{op}$, we obtain a commutative diagram of conflations
$$\xymatrix{X \ar[r]^l \ar@{=}[d]& K \ar[r] \ar[d]^r & T\ar[d]\\
  X\ar[r]^{f}&Y\ar[r]\ar[d]&C\ar[d]\\
  &F\ar@{=}[r]&F.}
  $$
Thus, we obtain that $f=rl$ with $l\in \Infl\mathcal{T}$ and $r\in \Infl\mathcal{F}$.

By the definition of $s$-torsion pairs, clearly, $\Infl\mathcal{T}\subseteq{}^ \perp \Infl\mathcal{F}$. Conversely, let $h \in {}^ \perp \Infl\mathcal{F}$, since for any $F \in \mathcal{F}$, there is an $\mathbb{E}$-triangle $0{\longrightarrow}F\stackrel{{\rm id}_F}{\longrightarrow}F{\dashrightarrow}$, we obtain that the zero morphism $0 \to F$ belongs to $\Infl\mathcal{F}$. Hence, we have that $\mathscr{C}(\text{\rm{cone}}(h),F)=0$. By \cite[Proposition 3.2]{2}, we obtain that ${\rm cone}(h)\in\mathcal{T}$, i.e., $h \in \Infl\mathcal{T}$, and then ${}^ \perp\Infl\mathcal{F} \subseteq\Infl\mathcal{T}$. Thus, $\Infl\mathcal{T}={}^ \perp \Infl\mathcal{F}$. Similarly, it can be proved that
$(\Infl\mathcal{T})^\perp=\Infl\mathcal{F}$.
Therefore, $(\Infl\mathcal{T}, \Infl\mathcal{F})$ is an inflation factorization system.

On the other hand, let $\left( {\mathcal{L},\mathcal{R}} \right)$ be an inflation factorization system, for any object $X \in \mathscr{C}$, consider the factorization of the inflation $0\rightarrow X$ with respective to $\left( {\mathcal{L},\mathcal{R}} \right)$, we have the following commutative diagram
$$\xymatrix{ 0 \ar[rr]\ar[rd]& &X\\
&T_X\ar[ru]_-r&
}$$
with $T_X \in {\rm Cone}(\mathcal{L})$ and $r \in \mathcal{R}$. Thus, we have an $\mathbb{E}$-triangle $T_X{\longrightarrow}X{\longrightarrow}F_X{\dashrightarrow}$ with $F_X \in {\rm Cone}(\mathcal{R}).$ Thus, $\mathscr{C}={\rm Cone}(\mathcal{L})\ast{\rm Cone}(\mathcal{R})$. While
$$\mathbb{E}^{-1}({\rm Cone}(\mathcal{L}),{\rm Cone}(\mathcal{R}))=\mathscr{C}({\rm Cone}(\mathcal{L}),{\rm Cone}(\mathcal{R}))=0$$ is immediately obtained from $\mathcal{L}\perp \mathcal{R}$. Hence,  $(\text{Cone}(\mathcal{L}),\text{Cone}(\mathcal{R}))$ is an $s$-torsion pair.

Clearly, ${\rm Cone}(\Infl\mathcal{T})\subseteq \mathcal{T}$. For any $T \in \mathcal{T}$, by the $\mathbb{E}$-triangle $0{\longrightarrow}T\stackrel{{\rm id}_T}{\longrightarrow}T{\dashrightarrow}$, we have that $T \in {\rm Cone}(\Infl\mathcal{T})$, i.e., $\mathcal{T}\subseteq{\rm Cone}(\Infl\mathcal{T})$.
Thus, ${\rm Cone}(\Infl\mathcal{T})= \mathcal{T}$.
Moreover, clearly, $\mathcal{L}\subseteq \Infl({\rm Cone}\mathcal{L})$, and $\Infl({\rm Cone}\mathcal{L})\subseteq\mathcal{L}$ is obtained from $\mathcal{L}={}^ \perp \mathcal{R}$. Therefore, we complete the proof.
\end{proof}

Dually, we have the following theorem, and omit the proof.
\begin{theorem}
Let $\mathscr{C}$ be an extriangulated category with negative first extensions. Then the assignments
\[
(\mathcal{T}, \mathcal{F}) \mapsto (\Defl\mathcal{T},\Defl\mathcal{F}) \quad \text{and} \quad (\mathcal{L}, \mathcal{R}) \mapsto ({\rm Cocone}(\mathcal{L}), {\rm Cocone}(\mathcal{R}))
\]
give the mutually inverse bijections between the following classes:
\begin{enumerate}
  \item $s$-torsion pairs in $\mathscr{C}$;
  \item Deflation factorization systems in $\mathscr{C}$.
\end{enumerate}
\end{theorem}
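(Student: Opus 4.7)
The plan is to mirror the proof of Theorem~\ref{main} with the systematic replacement of inflations, cones, and the axiom (ET4)$^{\rm op}$ by their duals: deflations, cocones, and the axiom (ET4). For two deflations $l\colon A\to B$ and $r\colon C\to D$, orthogonality $l\perp r$ reads $\mathscr{C}(\text{\rm cocone}(l),\text{\rm cocone}(r))=0$ and $\mathbb{E}^{-1}(\text{\rm cocone}(l),\text{\rm cocone}(r))=0$, so cocones will take over the structural role played by cones in Section~3.

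Starting from an $s$-torsion pair $(\mathcal{T},\mathcal{F})$, I would take any deflation $f\colon Y\to Z$, form its $\mathbb{E}$-triangle $K\to Y\stackrel{f}{\to}Z\dashrightarrow$, and decompose $K$ through the $s$-torsion pair as $T\to K\to F\dashrightarrow$ with $T\in\mathcal{T}$ and $F\in\mathcal{F}$. Applying (ET4) to the composable inflations $T\to K$ and $K\to Y$ should yield an object $E$ together with $\mathbb{E}$-triangles $T\to Y\to E\dashrightarrow$ and $F\to E\to Z\dashrightarrow$, and the commutativity of the resulting diagram gives a factorization $f=rl$ with $l\colon Y\to E\in\Defl\mathcal{T}$ and $r\colon E\to Z\in\Defl\mathcal{F}$. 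For the identity $\Defl\mathcal{T}={}^{\perp}\Defl\mathcal{F}$, one inclusion follows from the vanishing axioms of an $s$-torsion pair; for the reverse, given $h\in{}^{\perp}\Defl\mathcal{F}$ and any $F\in\mathcal{F}$, the split $\mathbb{E}$-triangle $F\stackrel{{\rm id}_F}{\to}F\to 0\dashrightarrow$ witnesses $F\to 0\in\Defl\mathcal{F}$, so $\mathscr{C}(\text{\rm cocone}(h),F)=0$ and $\mathbb{E}^{-1}(\text{\rm cocone}(h),F)=0$, and the dual of \cite[Proposition 3.2]{2} forces $\text{\rm cocone}(h)\in\mathcal{T}$.

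Conversely, given a deflation factorization system $(\mathcal{L},\mathcal{R})$, I would factor each deflation $X\to 0$ as $X\stackrel{l}{\to}F_X\stackrel{r}{\to}0$ with $l\in\mathcal{L}$ and $r\in\mathcal{R}$. The associated $\mathbb{E}$-triangle $T_X\to X\to F_X\dashrightarrow$ exhibits $\mathscr{C}={\rm Cocone}(\mathcal{L})\ast{\rm Cocone}(\mathcal{R})$, while the vanishing of $\mathscr{C}$ and $\mathbb{E}^{-1}$ between these subcategories is immediate from $\mathcal{L}\perp\mathcal{R}$. Mutual inverseness then reduces to ${\rm Cocone}(\Defl\mathcal{T})=\mathcal{T}$ (via the triangle $T\stackrel{{\rm id}_T}{\to}T\to 0\dashrightarrow$) and $\Defl({\rm Cocone}\,\mathcal{L})=\mathcal{L}$ (via $\mathcal{L}={}^{\perp}\mathcal{R}$). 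The only nontrivial point I anticipate is invoking the correct covariant symmetric form of \cite[Proposition 3.2]{2}, characterising $\mathcal{T}$ by $\mathscr{C}(-,\mathcal{F})=0=\mathbb{E}^{-1}(-,\mathcal{F})$; once that is in hand, the argument is pure bookkeeping, with (ET4) used in place of (ET4)$^{\rm op}$.
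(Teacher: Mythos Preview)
Your proposal is correct and is precisely the dualization the paper has in mind; the paper itself omits the proof entirely, writing only ``Dually, we have the following theorem, and omit the proof.'' One small simplification: you do not need a ``dual'' or ``covariant symmetric form'' of \cite[Proposition~3.2]{2} --- that proposition already states both characterisations $\mathcal{T}=\{X:\mathscr{C}(X,\mathcal{F})=0\}$ and $\mathcal{F}=\{X:\mathscr{C}(\mathcal{T},X)=0\}$, so the very same reference used in Theorem~\ref{main} applies verbatim here (and the $\mathbb{E}^{-1}$-vanishing you record is not needed for that step).
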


Let $\mathscr{C}$ be an abelian category, then each inflation in $\mathscr{C}$ is a monomorphism, and each deflation is an epimorphism. So the inflation factorization systems
and deflation factorization systems may also called {\em monomorphism factorization systems} and {\em epimorphism factorization systems}, respectively. Moreover, in this case, the $s$-torsion pairs are just the classical torsion pairs.
\begin{corollary}
Let $\mathscr{C}$ be an abelian category. Then assignments
\[
(\mathcal{T}, \mathcal{F}) \mapsto (\Infl\mathcal{T}, \Infl\mathcal{F}) \quad \text{and} \quad (\mathcal{L}, \mathcal{R}) \mapsto ({\rm Coker}(\mathcal{L}), {\rm Coker}(\mathcal{R}))
\]
give the mutually inverse bijections between the following classes:
\begin{enumerate}
  \item Torsion pairs in $\mathscr{C}$;
  \item Monomorphism factorization systems in $\mathscr{C}$.
\end{enumerate}
\end{corollary}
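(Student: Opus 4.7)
The plan is to derive this as a direct specialization of Theorem~\ref{main}. Any abelian category $\mathscr{C}$ carries a canonical extriangulated structure with negative first extensions: one takes $\mathbb{E} = \mathrm{Ext}^1$, with conflations being the short exact sequences, and $\mathbb{E}^{-1}=0$. Under this structure, the inflations are precisely the monomorphisms, and for any inflation $l:A\to B$, the cone $\mathrm{cone}(l)$ is just $\mathrm{Coker}(l)$. Hence the bijection in Theorem~\ref{main} sends $s$-torsion pairs to inflation factorization systems, and under the abelian specialization these become respectively torsion pairs (as noted in the paragraph above the corollary) and monomorphism factorization systems, giving the claimed statement with $\mathrm{Cone}$ replaced by $\mathrm{Coker}$.

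The verification splits into a few routine checks. First, I would observe that the orthogonality condition from the definition preceding Theorem~\ref{main}, namely
\[
\mathscr{C}(\mathrm{cone}(l),\mathrm{cone}(r))=0\quad\text{and}\quad\mathbb{E}^{-1}(\mathrm{cone}(l),\mathrm{cone}(r))=0,
\]
collapses to the single condition $\mathscr{C}(\mathrm{Coker}(l),\mathrm{Coker}(r))=0$ because $\mathbb{E}^{-1}\equiv 0$. This matches the classical orthogonality displayed in Remark~2.7(1). Next, I would note that an $s$-torsion pair in the abelian setting is a pair $(\mathcal{T},\mathcal{F})$ with $\mathscr{C}=\mathcal{T}*\mathcal{F}$ and $\mathscr{C}(\mathcal{T},\mathcal{F})=0$, the vanishing $\mathbb{E}^{-1}(\mathcal{T},\mathcal{F})=0$ being automatic; this is exactly the classical definition of a torsion pair.

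Having identified the two correspondences on both sides, I would appeal to Theorem~\ref{main} to conclude that $(\mathcal{T},\mathcal{F})\mapsto(\Infl\mathcal{T},\Infl\mathcal{F})$ is a bijection between torsion pairs and monomorphism factorization systems, with inverse $(\mathcal{L},\mathcal{R})\mapsto(\mathrm{Cone}(\mathcal{L}),\mathrm{Cone}(\mathcal{R}))$. Since $\mathrm{Cone}(\mathcal{L})=\mathrm{Coker}(\mathcal{L})$ and $\mathrm{Cone}(\mathcal{R})=\mathrm{Coker}(\mathcal{R})$ for classes of monomorphisms, the inverse assignment is exactly the one stated in the corollary.

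The main (and essentially only) obstacle is making sure that the abelian structure genuinely fits the extriangulated-with-negative-first-extensions framework used in Theorem~\ref{main}; once that identification is in place (which is standard and recorded in \cite[Example~2.4]{2}), no further calculation is needed and the corollary is immediate. I would therefore present the argument as a short specialization of Theorem~\ref{main}, rather than redoing the factorization and orthogonality computations from scratch.
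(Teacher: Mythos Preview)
Your proposal is correct and matches the paper's approach: the corollary is stated without an explicit proof, relying on the preceding paragraph which records precisely the identifications you make (inflations $=$ monomorphisms, cone $=$ cokernel, $s$-torsion pairs $=$ classical torsion pairs in the abelian setting), so that it follows immediately from Theorem~\ref{main}.
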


\begin{corollary}
Let $\mathscr{C}$ be an abelian category. Then assignments
\[
(\mathcal{T}, \mathcal{F}) \mapsto (\Defl\mathcal{T}, \Defl\mathcal{F}) \quad \text{and} \quad (\mathcal{L}, \mathcal{R}) \mapsto ({\rm Ker}(\mathcal{L}), {\rm Ker}(\mathcal{R}))
\]
give the mutually inverse bijections between the following classes:
\begin{enumerate}
  \item Torsion pairs in $\mathscr{C}$;
  \item Epimorphism factorization systems in $\mathscr{C}$.
\end{enumerate}
\end{corollary}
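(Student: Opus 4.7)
\emph{Proof proposal.} The plan is to deduce this corollary directly from the dual (deflation) version of Theorem~\ref{main} stated just above, by specializing from the extriangulated to the abelian setting. View $\mathscr{C}$ as an extriangulated category in the canonical way, so that the conflations are the short exact sequences; in particular, every deflation is an epimorphism, and for any epimorphism $f$ one has $\operatorname{cocone}(f)=\operatorname{Ker}(f)$. Moreover, in the abelian setting one may take $\mathbb{E}^{-1}\equiv 0$ (c.f.~\cite[Example~2.4]{2}), so the clause $\mathbb{E}^{-1}(\mathcal{T},\mathcal{F})=0$ in the definition of an $s$-torsion pair is automatic. Hence an $s$-torsion pair in $\mathscr{C}$ is nothing but a pair $(\mathcal{T},\mathcal{F})$ satisfying $\mathscr{C}=\mathcal{T}\ast\mathcal{F}$ and $\mathscr{C}(\mathcal{T},\mathcal{F})=0$, i.e.\ a classical torsion pair.

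Under these identifications, a deflation factorization system in $\mathscr{C}$ is precisely an epimorphism factorization system: the orthogonality condition $\mathscr{C}(\operatorname{cocone}(l),\operatorname{cocone}(r))=0$ becomes $\mathscr{C}(\operatorname{Ker}(l),\operatorname{Ker}(r))=0$ (the $\mathbb{E}^{-1}$ clause being vacuous), and the factorization axiom transfers verbatim since deflation equals epimorphism. At the same time, the class $\operatorname{Cocone}(\mathcal{L})$ appearing in the preceding theorem becomes $\operatorname{Ker}(\mathcal{L})$, and $\Defl\mathcal{T}$ is exactly the class of epimorphisms whose kernel lies in $\mathcal{T}$. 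Therefore the two assignments displayed in the corollary are literally the ones furnished by the deflation version of Theorem~\ref{main} after this translation.

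No substantive obstacle remains: once one verifies that the canonical extriangulated structure on an abelian category has $\mathbb{E}^{-1}=0$ and that cocones of epimorphisms are kernels, the bijection asserted follows from the preceding theorem by straightforward renaming, and there is nothing further to compute.
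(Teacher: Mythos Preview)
Your proposal is correct and follows essentially the same approach as the paper: the paper does not give a separate proof of this corollary but simply remarks before stating it that in an abelian category deflations are epimorphisms (so cocones are kernels) and $s$-torsion pairs coincide with classical torsion pairs, leaving the specialization of the deflation version of Theorem~\ref{main} implicit. Your write-up makes this specialization explicit, including the observation that $\mathbb{E}^{-1}=0$ renders the extra orthogonality clause vacuous, which is exactly the intended argument.
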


\begin{definition}
Let $\mathscr{C}$ be a triangulated category with the shift functor $[1]$.
A pair $(\mathcal {U},\mathcal {V})$ of subcategories of $\mathscr{C}$ is
called a {\em $t$-structure} in $\mathscr{C}$ if it satisfies the following conditions.

$(1)$~$\mathscr{C}=\mathcal {U}\ast\mathcal {V}$.
$(2)$~$\mathscr{C}(\mathcal {U},\mathcal {V})=0$.
$(3)$~$\mathcal {U}[1]\subseteq\mathcal {U}$.
\end{definition}
When $\mathscr{C}$ is a triangulated category, by \cite[Lemma 3.3]{2}, $t$-structures are exactly
$s$-torsion pairs in $\mathscr{C}$.

\begin{corollary}
Let $\mathscr{C}$ be a triangulated category. Then the assignments
\[
(\mathcal{T}, \mathcal{F}) \mapsto (\Infl\mathcal{T},\Infl\mathcal{F}) \quad \text{and} \quad (\mathcal{L}, \mathcal{R}) \mapsto ({\rm Cone}(\mathcal{L}), {\rm Cone}(\mathcal{R}))
\]
give the mutually inverse bijections between the following classes:
\begin{enumerate}
  \item $t$-structures in $\mathscr{C}$;
  \item Inflation factorization systems in $\mathscr{C}$.
\end{enumerate}
\end{corollary}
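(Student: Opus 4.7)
The plan is to obtain this corollary as a direct specialization of Theorem~\ref{main}, using the paper's observation that $t$-structures coincide with $s$-torsion pairs in the triangulated setting. First I would note that any triangulated category $\mathscr{C}$ is canonically an extriangulated category with a negative first extension structure (see \cite[Example 2.4]{2}), where the conflations are precisely the candidate triangles and $\mathbb{E}^{-1}(X,Y) = \mathscr{C}(X,Y[-1])$. In particular, every morphism $f \colon X \to Y$ in $\mathscr{C}$ is an inflation, since it can always be completed to a distinguished triangle $X \xrightarrow{f} Y \to Z \dashrightarrow$, and the cone in the extriangulated sense coincides with the usual mapping cone.

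Next I would invoke \cite[Lemma 3.3]{2} (already cited in the excerpt just above the corollary), which asserts that a pair $(\mathcal{T}, \mathcal{F})$ of subcategories of a triangulated category $\mathscr{C}$ is a $t$-structure if and only if it is an $s$-torsion pair. This identification reduces the claimed bijection to the one already established.

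Finally, I would apply Theorem~\ref{main} to the extriangulated category $\mathscr{C}$. Under the identification of $s$-torsion pairs with $t$-structures, the assignments
\[
(\mathcal{T}, \mathcal{F}) \mapsto (\Infl\mathcal{T}, \Infl\mathcal{F}), \qquad (\mathcal{L}, \mathcal{R}) \mapsto (\mathrm{Cone}(\mathcal{L}), \mathrm{Cone}(\mathcal{R}))
\]
are exactly those of Theorem~\ref{main}, hence they are mutually inverse bijections between $t$-structures and inflation factorization systems. I do not expect a genuine obstacle here: the only point worth a sentence of verification is that the extriangulated-categorical notions of inflation, cone, and negative first extension used in the definition of an inflation factorization system reduce in the triangulated case to their classical counterparts, so that Definition of $\perp$ via $\mathscr{C}(\mathrm{cone}(l), \mathrm{cone}(r)) = 0$ and $\mathbb{E}^{-1}(\mathrm{cone}(l), \mathrm{cone}(r)) = 0$ becomes the ordinary orthogonality condition $\mathscr{C}(\mathrm{cone}(l), \mathrm{cone}(r)[i]) = 0$ for $i = 0, -1$. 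With this remark, the corollary follows immediately.
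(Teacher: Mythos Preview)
Your proposal is correct and follows exactly the paper's approach: the corollary is stated without proof in the paper because it is immediate from Theorem~\ref{main} together with the sentence preceding it, namely that in a triangulated category $t$-structures coincide with $s$-torsion pairs by \cite[Lemma~3.3]{2}. Your additional remarks on how inflations, cones, and $\mathbb{E}^{-1}$ specialize in the triangulated setting are accurate but not strictly needed, as these identifications are already implicit in the paper's framework.
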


\begin{corollary}
Let $\mathscr{C}$ be a triangulated category. Then the assignments
\[
(\mathcal{T}, \mathcal{F}) \mapsto (\Defl\mathcal{T},\Defl\mathcal{F}) \quad \text{and} \quad (\mathcal{L}, \mathcal{R}) \mapsto ({\rm Cocone}(\mathcal{L}), {\rm Cocone}(\mathcal{R}))
\]
give the mutually inverse bijections between the following classes:
\begin{enumerate}
  \item $t$-structures in $\mathscr{C}$;
  \item Deflation factorization systems in $\mathscr{C}$.
\end{enumerate}
\end{corollary}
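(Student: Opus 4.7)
The plan is to derive this corollary by composing two earlier results rather than reworking the proof from scratch. Any triangulated category $\mathscr{C}$ carries a canonical structure of extriangulated category with negative first extensions: the $\mathbb{E}$-bifunctor is $\mathscr{C}(-, -[1])$ and the negative first extension bifunctor is $\mathscr{C}(-, -[-1])$, as recorded in the excerpt after the definition of negative first extensions. With this structure, the preceding theorem on the bijection between $s$-torsion pairs and deflation factorization systems applies to $\mathscr{C}$ verbatim.

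Next, I would invoke the fact, stated just above the corollary and cited from \cite[Lemma 3.3]{2}, that $t$-structures in $\mathscr{C}$ are exactly the $s$-torsion pairs. This identification is the identity on the underlying pairs of subcategories, so plugging it into the bijection from the dual of Theorem~\ref{main} immediately produces the desired bijection between $t$-structures and deflation factorization systems. The assignments $(\mathcal{T}, \mathcal{F}) \mapsto (\Defl\mathcal{T}, \Defl\mathcal{F})$ and $(\mathcal{L}, \mathcal{R}) \mapsto ({\rm Cocone}(\mathcal{L}), {\rm Cocone}(\mathcal{R}))$ appearing in the corollary are literally those supplied by the dual theorem.

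The only point worth a brief sanity check is that in a triangulated category every morphism is a deflation, since any $f : Y \to C$ fits into a triangle $A \to Y \xrightarrow{f} C \to A[1]$ which is an $\mathbb{E}$-triangle realizing $f$ as a deflation with $\mathrm{cocone}(f) = A$. Consequently $\Defl\mathcal{T}$ and ${\rm Cocone}(\mathcal{L})$ retain their extriangulated meaning in the triangulated setting without any modification, and the corollary follows. I do not anticipate a genuine obstacle here; the statement is a formal consequence of the dual of Theorem~\ref{main} together with the identification of $t$-structures with $s$-torsion pairs, so the write-up amounts to assembling these two ingredients.
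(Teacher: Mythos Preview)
Your proposal is correct and matches the paper's intended argument: the corollary is stated without proof because it is an immediate specialization of the dual theorem (Theorem on deflation factorization systems) combined with the identification of $t$-structures with $s$-torsion pairs from \cite[Lemma 3.3]{2}, exactly as you outline. Your sanity check that every morphism in a triangulated category is a deflation is a welcome clarification but is not strictly needed for the formal deduction.
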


In what follows, we use an $s$-torsion pair induced by an $(m+1)$-term silting complex to construct an extriangulated factorization system in the $m$-extended module category $m$-${\rm mod}A$ of a finite-dimensional algebra $A$.
According to \cite{1}, for an $(m+1)$-term silting complex ${\bf P}$ in $K^b({\rm proj}A)$, set
\begin{align*}
D^{\leq0}({\bf P})=\{{\bf X}\in D^b({\rm mod}A)~|~\Hom({\bf P},{\bf X}[i])=0,\forall i>0\}
\end{align*}
and
\begin{align*}
D^{\geq0}({\bf P})=\{{\bf X}\in D^b({\rm mod}A)~|~\Hom({\bf P},{\bf X}[i])=0,\forall i<0\}.
\end{align*}
Define
$\mathcal {T}({\bf P})=D^{\leq0}({\bf P})\cap m$-${\rm mod} A$ and $\mathcal {F}({\bf P})=D^{\geq0}({\bf P})[-1]\cap m$-${\rm mod} A$.
Then by \cite[Proposition 2.10]{1}, $(\mathcal {T}({\bf P}),\mathcal {F}({\bf P}))$ is an $s$-torsion pair in $m$-${\rm mod}A$.

\begin{example}
Let $A$ be the path algebra of the quiver $\xymatrix{1 \ar[r]&2 \ar[r]&3.}$ Then the Auslander-Reiten quiver of the 2-extended module category 2-${\rm mod}\,A$ is as follows:
$$\xymatrix{
  &&P_1 \ar[dr]^-{l} &&P_3[1] \ar[dr] &&S_2[1] \ar[dr]&&I_1[1]\\
&P_2 \ar[ur] \ar[dr] &&I_2 \ar[ur] \ar[dr]^-{r}&&P_2[1] \ar[ur] \ar[dr]&&I_2[1] \ar[ur] & \\
P_3 \ar[ur]&&S_2 \ar[ur]&&I_1 \ar[ur]&&P_1[1] \ar[ur] &&\\
}$$

Let $\mathbf{P}=P_3[1]\bigoplus P_1[1]\bigoplus I_1[1]$, viewed as $3$-term complex. It is straightforward to check that $\mathbf{P}$ is $3$-term silting. Moreover, it is easy to see that
$$ \mathcal{T(\mathbf{P})}={\rm add} \{P_3[1],P_1[1],I_2[1],I_1[1] \}$$
and
$$ \mathcal{F(\mathbf{P})}={\rm add} \{P_3,P_2 ,P_1 ,S_2 ,I_2,I_1,S_2[1] \}.$$

By Theorem \ref{main}, we can obtain an inflation factorization system $(\Infl\mathcal{T(\mathbf{P})},\Infl\mathcal{F(\mathbf{P})})$ in $2$-${\rm mod}A$ with
$\Infl\mathcal{T(\mathbf{P})}={\rm add}\mathcal{X}$ and $\Infl\mathcal{F(\mathbf{P})}={\rm add}\mathcal{Y}$, where
\[
\mathcal{X}=\left\{{\rm id}_X~|~X\in \text{2-}{\rm mod}A\right\}\bigcup\left\{
\begin{aligned}
&0 \rightarrow P_3[1], \quad P_1 \rightarrow I_2, \quad P_2 \rightarrow S_2,\quad 0 \rightarrow I_2[1],\\
&0 \rightarrow P_1[1], \quad I_1 \rightarrow P_2[1], \quad I_2 \rightarrow P_3[1], \\
&0 \rightarrow I_1[1], \quad S_2[1] \rightarrow I_2[1], \quad P_2[1] \rightarrow P_1[1],\\
& P_3[1] \rightarrow P_1[1],\quad I_1 \rightarrow S_2[1],\quad P_2[1] \rightarrow S_2[1]\oplus P_1[1]
\end{aligned}
\right\}
\]
and
\[
\mathcal{Y}=\left\{{\rm id}_Y~|~Y\in \text{2-}{\rm mod}A\right\}\bigcup\left\{
\begin{aligned}
&0 \rightarrow P_3, \quad 0 \rightarrow P_2,\quad 0 \rightarrow P_1,\quad I_2 \rightarrow I_1,\\
&0 \rightarrow S_2, \quad 0 \rightarrow I_2,\quad 0 \rightarrow I_1,\quad S_2 \rightarrow I_2,\\
&P_3 \rightarrow P_1,\quad P_3 \rightarrow P_2,\quad P_2 \rightarrow P_1, \\
&0 \rightarrow S_2[1], \quad P_2 \rightarrow P_1\oplus S_2,\quad P_3[1] \rightarrow P_2[1]
\end{aligned}
\right\}.
\]
The inflation $f=rl$ in the above Auslander-Reiten quiver belongs neither to $\Infl\mathcal{T(\mathbf{P})}$ nor to $\Infl\mathcal{F(\mathbf{P})}$, but $l\in\Infl\mathcal{T(\mathbf{P})}$ and $r\in\Infl\mathcal{F(\mathbf{P})}$, i.e., this is a factorization of $f$ via the inflation factorization system $(\Infl\mathcal{T(\mathbf{P})},\Infl\mathcal{F(\mathbf{P})})$.
\end{example}

\section{Gluing $s$-torsion pairs and factorization systems}

In this section, we consider the gluing of $s$-torsion pairs and extriangulated factorization systems under recollements of extriangulated categories. First of all, let us recall recollements of extriangulated categories.

\begin{definition}\cite[Definition 3.1]{16}
Let $\mathcal{A}$, $\mathcal{B}$ and $\mathcal{C}$ be three extriangulated categories. A \emph{recollement} of $\mathcal{B}$ relative to
$\mathcal{A}$ and $\mathcal{C}$, denoted by ($\mathcal{A}$, $\mathcal{B}$, $\mathcal{C}$), is a diagram
\begin{equation}
  \xymatrix{\mathcal{A}\ar[rr]|{i_{*}}&&\ar@/_1pc/[ll]|{i^{*}}\ar@/^1pc/[ll]|{i^{!}}\mathcal{B}
\ar[rr]|{j^{\ast}}&&\ar@/_1pc/[ll]|{j_{!}}\ar@/^1pc/[ll]|{j_{\ast}}\mathcal{C}}
\end{equation}
given by two exact functors $i_{*},j^{\ast}$, two right exact functors $i^{\ast}$, $j_!$ and two left exact functors $i^{!}$, $j_\ast$, which satisfies the following conditions:
\begin{itemize}
  \item [(R1)] $(i^{*}, i_{\ast}, i^{!})$ and $(j_!, j^\ast, j_\ast)$ are adjoint triples.
  \item [(R2)] ${\rm Im} i_{\ast}={\rm Ker} j^{\ast}$.
  \item [(R3)] $i_\ast$, $j_!$ and $j_\ast$ are fully faithful.
  \item [(R4)] For each $X\in\mathcal{B}$, there exists a left exact $\mathbb{E}_{\mathcal {B}}$-triangle sequence
  \begin{equation}
  \xymatrix{i_\ast i^! X\ar[r]^-{\theta_X}&X\ar[r]^-{\vartheta_X}&j_\ast j^\ast X\ar[r]&i_\ast A}
   \end{equation}
  with $A\in \mathcal{A}$, where $\theta_X$ and  $\vartheta_X$ are given by the adjunction morphisms.
  \item [(R5)] For each $X\in\mathcal{B}$, there exists a right exact $\mathbb{E}_{\mathcal {B}}$-triangle sequence
  \begin{equation}
  \xymatrix{i_\ast\ar[r] A' &j_! j^\ast X\ar[r]^-{\upsilon_X}&X\ar[r]^-{\nu_X}&i_\ast i^\ast X&}
   \end{equation}
 with $A'\in \mathcal{A}$, where $\upsilon_X$ and $\nu_X$ are given by the adjunction morphisms.
\end{itemize}
\end{definition}
See \cite[Lemma 3.3, Proposition 3.4]{16} for some properties of recollements of extriangulated categories.
\begin{definition} An extriangulated category $\mathscr{C}$ is called an {\em extriangulated category with balanced negative first extensions}, if $\mathscr{C}$ has a negative first extension structure which satisfies that
$$\mathbb{E}^{-1}(-, \mathcal{P}(\mathscr{C}))=0.$$
\end{definition}
Clearly, exact categories and triangulated categories are extriangulated
categories with balanced negative first extensions.
\begin{lemma}\label{Iso}
Let $\mathcal{A}, \mathcal{B}$ be extriangulated categories with balanced negative first
extensions and $\mathcal{A}$ has enough projectives. Let $G:\mathcal{A}\rightarrow \mathcal{B}$ be a functor which admits a left adjoint functor $F$. If $G$ is exact and preserves projectives,
then $\mathbb{E}^{-1}_{\mathcal{A}}(FX,Y)\cong \mathbb{E}^{-1}_{\mathcal{B}}(X,GY)$ for any $X\in\mathcal{B}$ and $Y\in\mathcal{A}$.
\end{lemma}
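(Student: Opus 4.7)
The plan is to reduce the computation of $\mathbb{E}^{-1}$ on both sides to a kernel of a Hom map by means of a single projective presentation in $\mathcal{A}$, and then to transport that kernel across the adjunction. Since $\mathcal{A}$ has enough projectives, I would first choose a deflation $p: P \to Y$ with $P \in \mathcal{P}(\mathcal{A})$ and fit it into an $\mathbb{E}_{\mathcal{A}}$-triangle $K \xrightarrow{i} P \xrightarrow{p} Y \dashrightarrow$. Applying $G$ yields an $\mathbb{E}_{\mathcal{B}}$-triangle $GK \xrightarrow{Gi} GP \xrightarrow{Gp} GY \dashrightarrow$ because $G$ is exact, and $GP \in \mathcal{P}(\mathcal{B})$ because $G$ preserves projectives.

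Next, I would feed each of these $\mathbb{E}$-triangles into the first of the two long exact sequences from the definition of a negative first extension structure, using $FX$ as test object in $\mathcal{A}$ and $X$ as test object in $\mathcal{B}$. By the balanced hypothesis, $\mathbb{E}^{-1}_{\mathcal{A}}(FX, P) = 0$ and $\mathbb{E}^{-1}_{\mathcal{B}}(X, GP) = 0$, so the two long exact sequences truncate to identifications
\begin{align*}
\mathbb{E}^{-1}_{\mathcal{A}}(FX, Y) &\cong \ker\bigl(\mathcal{A}(FX, K) \xrightarrow{i_*} \mathcal{A}(FX, P)\bigr), \\
\mathbb{E}^{-1}_{\mathcal{B}}(X, GY) &\cong \ker\bigl(\mathcal{B}(X, GK) \xrightarrow{(Gi)_*} \mathcal{B}(X, GP)\bigr).
\end{align*}

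Finally, the adjunction $(F, G)$ supplies natural isomorphisms $\mathcal{A}(FX, -) \cong \mathcal{B}(X, G-)$, and naturality at the morphism $i : K \to P$ identifies the two horizontal arrows above, hence their kernels. Combined with the two displayed identifications, this produces the required isomorphism $\mathbb{E}^{-1}_{\mathcal{A}}(FX, Y) \cong \mathbb{E}^{-1}_{\mathcal{B}}(X, GY)$. The one step that needs real care is the exactness bookkeeping: one must check that ``$G$ exact'' in the sense used in the paper indeed sends $\mathbb{E}_{\mathcal{A}}$-triangles to $\mathbb{E}_{\mathcal{B}}$-triangles, and that the balanced vanishing genuinely collapses the long exact sequence into a kernel description; once these are in place the remainder is a naturality diagram chase through the adjunction.
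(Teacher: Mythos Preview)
Your proposal is correct and follows essentially the same route as the paper: both take a projective presentation $K\to P\to Y\dashrightarrow$ in $\mathcal{A}$, push it through the exact projective-preserving functor $G$, use the balanced vanishing $\mathbb{E}^{-1}(-,\mathcal{P})=0$ to reduce $\mathbb{E}^{-1}$ to a Hom-kernel, and then identify the two kernels via the adjunction. The paper phrases the last step as an application of the Five-Lemma to the resulting commutative ladder, but this is exactly your kernel-comparison via naturality.
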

\begin{proof} For any $Y\in \mathcal{A}$, since $\mathcal{A}$ has enough projectives, there exists an $\mathbb{E}_{\A}$-triangle \begin{equation}\label{1}\xymatrix{Y'\ar[r]^-{}&P\ar[r]^-{}&Y\ar@{-->}[r]^{}&}\end{equation} with $P\in \mathcal{P({\mathcal{A}})}$. Since $G$ is an exact functor and preserves projectives, we obtain the following $\mathbb{E}_{\mathcal{B}}$-triangle
\begin{equation}\label{2}\xymatrix{GY'\ar[r]^-{}&GP\ar[r]^-{}&GY\ar@{-->}[r]&}\end{equation}
with $GP\in \mathcal{P}(\mathcal{B})$. Applying the functors $\mathcal{A}(FX,-)$ and $\mathcal {B}(X,-)$ to (\ref{1}) and (\ref{2}), respectively, we get the following commutative diagram of exact sequences
$$\xymatrix{
  0\ar[r]&\mathbb{E}^{-1}_{\mathcal{A}}(FX,Y)\ar[r]\ar[d]&\mathcal{A}(FX,Y')\ar[d]^-{\cong} \ar[r] & \mathcal{A}(FX,P)  \ar[d]^-{\cong}\\
 0\ar[r]&\mathbb{E}^{-1}_{\mathcal{B}}(X,GY)\ar[r] & \mathcal{B}(X,GY')\ar[r] &  \mathcal{B}(X,GP) .}$$
By Five-Lemma, we finish the proof.
\end{proof}

\begin{theorem}\label{gs}
 Let $(\mathcal{A}, \mathcal{B}, \mathcal{C})$ be a recollement of extriangulated categories with balanced negative first extensions. Let $(\mathcal{T}_{1}, \mathcal{F}_{1})$ and $(\mathcal{T}_{2}, \mathcal{F}_{2})$ be $s$-torsion pairs in $\mathcal{A}$ and $\mathcal{C}$, respectively.
  Set
  $$\mathcal{T}=\{B\in\mathcal{B}\;|\;i^*B\in\mathcal{T}_{1}~\text{and}~j^*B\in\mathcal{T}_{2}\}~\text{and}~
  \mathcal{F}=\{B\in\mathcal{B}\;|\;i^!B\in\mathcal{F}_{1}~\text{and}~j^*B\in\mathcal{F}_{2}\}.$$
 If $i^*,i^!$ are exact and $i^!$ preserves projective objects, then $(\mathcal{T}, \mathcal{F})$ is an $s$-torsion in $\mathcal{B}$.
\end{theorem}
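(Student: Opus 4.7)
The three defining properties of an $s$-torsion pair must be established: (1) $\mathcal{B}=\mathcal{T}*\mathcal{F}$; (2) $\mathcal{B}(\mathcal{T},\mathcal{F})=0$; (3) $\mathbb{E}_\mathcal{B}^{-1}(\mathcal{T},\mathcal{F})=0$. The plan is to dispatch the two orthogonality conditions first using the adjunctions of the recollement, and then construct the required decomposition by gluing.

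For (2), fix $T\in\mathcal{T}$ and $F\in\mathcal{F}$. From (R5) extract an $\mathbb{E}_\mathcal{B}$-triangle of shape $j_!j^*T\to T\to i_*i^*T\dashrightarrow$ (absorbing the $i_*A'$ term via the standard recollement manipulations in \cite{16}). Applying $\mathcal{B}(-,F)$ yields an exact sequence whose outer terms are computed by the adjunctions $i_*\dashv i^!$ and $j_!\dashv j^*$:
\[
\mathcal{B}(i_*i^*T,F)\cong \mathcal{A}(i^*T,i^!F)=0,\qquad \mathcal{B}(j_!j^*T,F)\cong \mathcal{C}(j^*T,j^*F)=0,
\]
because $i^*T\in\mathcal{T}_1,\ i^!F\in\mathcal{F}_1$ and $j^*T\in\mathcal{T}_2,\ j^*F\in\mathcal{F}_2$ by definition of $\mathcal{T},\mathcal{F}$. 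Hence $\mathcal{B}(T,F)=0$. For (3), repeat the procedure with $\mathbb{E}_\mathcal{B}^{-1}(-,F)$, this time invoking Lemma \ref{Iso} to carry the adjunction isomorphism to $\mathbb{E}^{-1}$: the pair $i_*\dashv i^!$ satisfies its hypotheses because $i^!$ is exact and preserves projectives by assumption, and analogously for $j_!\dashv j^*$ (with $j^*$ exact by (R1) and preserving projectives via the recollement). The outer $\mathbb{E}^{-1}_\mathcal{B}$-terms again reduce to $\mathbb{E}^{-1}_\mathcal{A}(i^*T,i^!F)$ and $\mathbb{E}^{-1}_\mathcal{C}(j^*T,j^*F)$, both vanishing because $(\mathcal{T}_k,\mathcal{F}_k)$ is an $s$-torsion pair.

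For (1), given $B\in\mathcal{B}$ I would construct an $\mathbb{E}_\mathcal{B}$-triangle $T\to B\to F\dashrightarrow$ with $T\in\mathcal{T},F\in\mathcal{F}$ in stages. First, decompose $j^*B$ in $\mathcal{C}$ via $(\mathcal{T}_2,\mathcal{F}_2)$ to get an $\mathbb{E}_\mathcal{C}$-triangle $T_2\to j^*B\to F_2\dashrightarrow$. Second, transport this triangle into $\mathcal{B}$ using $j_*$ (or $j_!$) and combine it with the (R4) sequence $i_*i^!B\to B\to j_*j^*B\to i_*A$ to produce a preliminary $\mathbb{E}_\mathcal{B}$-triangle involving $B$ whose $j^*$-image realises the chosen $\mathcal{C}$-decomposition. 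Third, decompose the $\mathcal{A}$-component that appears (an object built from $i^!B$ and the defect $A$) via $(\mathcal{T}_1,\mathcal{F}_1)$, then apply $i_*$ and use (ET4) and (ET4)$^{\mathrm{op}}$ to patch the two decompositions into a single $\mathbb{E}_\mathcal{B}$-triangle $T\to B\to F\dashrightarrow$. Finally, apply $i^*,i^!,j^*$ to this triangle; using their exactness together with the standard identities $j^*i_*=0$, $i^*i_*\cong\mathrm{id}$, $i^!i_*\cong\mathrm{id}$, $j^*j_!\cong\mathrm{id}\cong j^*j_*$, check that $i^*T\in\mathcal{T}_1,\ j^*T\in\mathcal{T}_2$ and $i^!F\in\mathcal{F}_1,\ j^*F\in\mathcal{F}_2$.

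The main obstacle is this gluing step in (1): it demands an intricate diagram chase and careful bookkeeping to ensure that the resulting $T$ and $F$ have the prescribed images in $\mathcal{A}$ and $\mathcal{C}$. Unlike the triangulated case, where the octahedral axiom instantly supplies the required 3\(\times\)3-diagram, here we must chain together several applications of the extriangulated analogues (ET4) and (ET4)$^{\mathrm{op}}$, and verify that the $\mathbb{E}$-triangle structures behave correctly under $i_*, j_*, j_!$. The verification that $i^!F\in\mathcal{F}_1$ is particularly subtle: it depends on the exactness of $i^!$ (to propagate the $\mathcal{A}$-decomposition chosen in stage three) and on the $\mathcal{F}_1$-membership being preserved through the $3\times 3$ patch. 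Once this construction is in place, the three conditions combine to give the desired $s$-torsion pair in $\mathcal{B}$.
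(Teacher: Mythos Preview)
Your approach matches the paper's: the orthogonality conditions (2) and (3) are handled via the same $\mathbb{E}_\mathcal{B}$-triangle $j_!j^*T\to T\to i_*i^*T\dashrightarrow$ together with the adjunctions and Lemma~\ref{Iso} (the paper in fact packages both into a single long exact sequence, and makes explicit the chain ``$i^!$ exact $\Rightarrow$ $j_*$ exact $\Rightarrow$ $j^*$ preserves projectives'' that you gloss as ``via the recollement''). For (1) the paper simply defers to \cite[Theorem~3.4]{hhp}, so your explicit gluing sketch is more detailed than what is actually written here but is the expected argument.
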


\begin{proof}
For any $T\in\mathcal{T}$ and $F\in\mathcal{F}$, since $i^*$ is exact, by \cite[Proposition 3.4(2)]{16}, there is an $\mathbb{E}_{\mathcal{B}}$-triangle
  $$j_!j^{\ast}T{\longrightarrow}
  T{\longrightarrow}i_{\ast}i^{\ast}T\dashrightarrow.$$
  Applying $\mathcal{B}(-, F)$ to this $\mathbb{E}_{\mathcal{B}}$-triangle, we get an exact sequence
  $$\xymatrix@C=.18in{
 \mathbb{E}^{-1}_{\mathcal{B}}(i_*i^*T,F) \ar[r]&\mathbb{E}^{-1}_{\mathcal{B}}(T,F) \ar[r]&\mathbb{E}^{-1}_{\mathcal{B}}(j_!j^*T,F)\ar[r]&\mathcal{B}(i_*i^*T,F) \ar[r]&\mathcal{B}(T,F) \ar[r]&\mathcal{B}(j_!j^*T,F).
  }$$
  Note that $i^*T\in \mathcal{T}_{1}$, $i^!F\in \mathcal{F}_{1}$, $j^*T\in \mathcal{T}_{2}$ and $j^*F\in \mathcal{F}_{2}$, we have that
  $$\mathcal{B}(i_*i^*T,F)\cong \mathcal{B}(i^*T,i^!F)=0~\text{and}~
  \mathcal{B}(j_!j^*T,F)\cong\mathcal{B}(j^*T,j^*F)=0.$$ Thus, $\mathcal{B}(T,F)=0$, i.e., $\mathcal{B}(\mathcal{T}, \mathcal{F}) =0$.

  Since $i^!$ is exact and preserves projectives, by Lemma \ref{Iso},
  we obtain that $$\mathbb{E}^{-1}_{\mathcal{B}}(i_*i^*T,F)\cong \mathbb{E}^{-1}_{\mathcal{A}}(i^*T,i^!F)=0.$$
  Similarly, since $i^!$ is exact, by \cite[Lemma 3.3(8')]{16}, we obtain that $j_*$ is exact, and then $j^*$ preserves projectives by \cite[Lemma 3.3(4)]{16}. Since $j^*$ is also exact, by Lemma \ref{Iso}, we obtain that  $$\mathbb{E}^{-1}_{\mathcal{B}}(j_!j^*T,F)\cong \mathbb{E}^{-1}_{\mathcal{C}}(j^*T,j^*F)=0.$$ Hence, $\mathbb{E}^{-1}(T, F) =0$, i.e., $\mathbb{E}^{-1}(\mathcal{T}, \mathcal{F}) =0$. The proof of $\mathcal{B}=\mathcal{T}\ast\mathcal{F}$ is the same as given in the proof of \cite[Theorem 3.4]{hhp}. Therefore, we complete the proof.
\end{proof}

\begin{theorem}
  Let $(\mathcal{A}, \mathcal{B}, \mathcal{C})$ be a recollement of extriangulated categories with balanced negative extensions. Let $(\mathcal{E}_{1}, \mathcal{M}_{1})$ and $(\mathcal{E}_{2}, \mathcal{M}_{2})$ be inflation factorization systems in $\mathcal{A}$ and $\mathcal{C}$, respectively.
  Set
  $$\mathcal{E}=\{f\;{\rm is}\;{\rm an}\;{\rm  inflation}\;|\;i^*f\;\in \mathcal{E}_{1}\; {\rm and}\; j^*f\; \in \mathcal{E}_{2}\}$$
  and
$$\mathcal{M}=\{g\;{\rm is}\;{\rm an}\;{\rm  inflation}\;|\;i^!g\;\in \mathcal{M}_{1}\; {\rm and}\; j^*g\; \in \mathcal{M}_{2}\}.$$
 If $i^*,i^!$ are exact and $i^!$ preserves projective objects, then $(\mathcal{E}, \mathcal{M})$ is an inflation factorization system in $\mathcal{B}$.
\end{theorem}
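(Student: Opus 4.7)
The plan is to reduce this statement to Theorem \ref{main} together with Theorem \ref{gs}, rather than verifying the three axioms of an inflation factorization system directly. Using the bijection of Theorem \ref{main} applied to $\mathcal{A}$ and $\mathcal{C}$, the given data $(\mathcal{E}_{1},\mathcal{M}_{1})$ and $(\mathcal{E}_{2},\mathcal{M}_{2})$ correspond to $s$-torsion pairs
\[
(\mathcal{T}_{1},\mathcal{F}_{1})=(\mathrm{Cone}(\mathcal{E}_{1}),\mathrm{Cone}(\mathcal{M}_{1}))\qquad\text{and}\qquad (\mathcal{T}_{2},\mathcal{F}_{2})=(\mathrm{Cone}(\mathcal{E}_{2}),\mathrm{Cone}(\mathcal{M}_{2}))
\]
in $\mathcal{A}$ and $\mathcal{C}$, with inverses $\mathcal{E}_{i}=\Infl\mathcal{T}_{i}$ and $\mathcal{M}_{i}=\Infl\mathcal{F}_{i}$. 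By Theorem \ref{gs}, these glue to an $s$-torsion pair $(\mathcal{T},\mathcal{F})$ in $\mathcal{B}$, where
\[
\mathcal{T}=\{B\in\mathcal{B}\mid i^{*}B\in\mathcal{T}_{1},\ j^{*}B\in\mathcal{T}_{2}\},\qquad \mathcal{F}=\{B\in\mathcal{B}\mid i^{!}B\in\mathcal{F}_{1},\ j^{*}B\in\mathcal{F}_{2}\}.
\]
Applying the bijection of Theorem \ref{main} once more, now in $\mathcal{B}$, yields an inflation factorization system $(\Infl\mathcal{T},\Infl\mathcal{F})$ in $\mathcal{B}$.

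The remaining task is then to identify this with $(\mathcal{E},\mathcal{M})$. Let $f$ be any inflation in $\mathcal{B}$, fix an $\mathbb{E}_{\mathcal{B}}$-triangle $X\xrightarrow{f}Y\to C\dashrightarrow$, so $C=\mathrm{cone}(f)$. Since $i^{*}$ and $j^{*}$ are exact, they send this conflation to conflations in $\mathcal{A}$ and $\mathcal{C}$, hence $\mathrm{cone}(i^{*}f)\cong i^{*}C$ and $\mathrm{cone}(j^{*}f)\cong j^{*}C$. Therefore
\[
f\in\mathcal{E}\iff i^{*}f\in\Infl\mathcal{T}_{1}\ \text{and}\ j^{*}f\in\Infl\mathcal{T}_{2}\iff i^{*}C\in\mathcal{T}_{1}\ \text{and}\ j^{*}C\in\mathcal{T}_{2}\iff C\in\mathcal{T},
\]
that is, $\mathcal{E}=\Infl\mathcal{T}$. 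The analogous argument for $\mathcal{M}$ uses that $i^{!}$ is exact (given) and that $j^{*}$ is exact, so $\mathrm{cone}(i^{!}f)\cong i^{!}C$ and $\mathrm{cone}(j^{*}f)\cong j^{*}C$, giving $\mathcal{M}=\Infl\mathcal{F}$.

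The main obstacle is a bookkeeping point rather than a deep one: one must be sure that the various functors commute with taking cones of inflations, which is precisely what exactness of extriangulated functors guarantees, and that the hypotheses of Theorem \ref{gs} (namely exactness of $i^{*},i^{!}$ and the fact that $i^{!}$ preserves projectives) are inherited directly from the hypotheses of the present theorem, so that the intermediate $s$-torsion pair $(\mathcal{T},\mathcal{F})$ really exists. Once these compatibilities are in hand, the two bijections package the result with no further verification of factorization axioms or orthogonality needed. I would not carry out the direct verification of axioms (1)--(3) from the definition of an inflation factorization system, because Theorem \ref{main} already encodes them.
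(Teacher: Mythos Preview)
Your proposal is correct and follows essentially the same approach as the paper: convert the given inflation factorization systems to $s$-torsion pairs via Theorem~\ref{main}, glue them using Theorem~\ref{gs}, apply Theorem~\ref{main} again in $\mathcal{B}$, and then identify $(\Infl\mathcal{T},\Infl\mathcal{F})$ with $(\mathcal{E},\mathcal{M})$ by using exactness of $i^{*}$, $i^{!}$, and $j^{*}$ to commute these functors with cones. The paper carries out precisely this strategy, with the identification step written as a short chain of set equalities rather than a chain of biconditionals, but the content is the same.
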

\begin{proof}
By Theorem \ref{main}, $(\Cone\mathcal{E}_{1},\Cone\mathcal{M}_{1})$ and $(\Cone\mathcal{E}_{2},\Cone\mathcal{M}_{2})$ are $s$-torsion pairs in $\mathcal{A}$ and $\mathcal{C}$, respectively. Set
$$\mathcal{T}=\{B\in\mathcal{B}\;|\;i^*B\in\Cone\mathcal{E}_{1}~\text{and}~j^*B\in\Cone\mathcal{E}_{2}\}$$
and
$$\mathcal{F}=\{B\in\mathcal{B}\;|\;i^!B\in\Cone\mathcal{M}_{1}~\text{and}~j^*B\in\Cone\mathcal{M}_{2}\}.$$
By Theorem \ref{gs}, $(\mathcal{T},\mathcal{F})$ is an $s$-torsion pair in $\mathcal{B}$. Then
\begin{align*}\Infl\mathcal {T}&=\{f~\text{is~an~inflation}~|~{\rm cone}(f)\in\mathcal {T}\}\\
&=\{f~\text{is~an~inflation}~|~i^*({\rm cone}(f))\in\Cone\mathcal{E}_{1}~\text{and}~~j^*({\rm cone}(f))\in\Cone\mathcal{E}_{2}\}\\
&=\{f~\text{is~an~inflation}~|~{\rm cone}(i^*f)\in\Cone\mathcal{E}_{1}~\text{and}~~{\rm cone}(j^*f)\in\Cone\mathcal{E}_{2}\}=:\mathcal{E}'.
\end{align*}
Clearly, $\mathcal{E}\subseteq\mathcal{E}'$. Conversely, for any $f\in\mathcal{E}'$, since $i^*, j^*$ are exact, we have that $i^*f\in\Infl(\Cone\mathcal{E}_1)=\mathcal{E}_1$ and $j^*f\in\Infl(\Cone\mathcal{E}_2)=\mathcal{E}_2$, i.e., $\mathcal{E}'\subseteq\mathcal{E}$. Hence, $\Infl\mathcal{T}=\mathcal{E}$.

Similarly, we have that $\Infl\mathcal {F}=\mathcal{M}$. By Theorem \ref{main} again, we obtain that $(\mathcal{E}, \mathcal{M})$ is an inflation factorization system in $\mathcal{B}$.
\end{proof}

Dually, we have the following theorem, and omit the proof.
\begin{theorem}
  Let $(\mathcal{A}, \mathcal{B}, \mathcal{C})$ be a recollement of extriangulated categories with balanced negative extensions. Let $(\mathcal{E}_{1}, \mathcal{M}_{1})$ and $(\mathcal{E}_{2}, \mathcal{M}_{2})$ be deflation factorization systems in $\mathcal{A}$ and $\mathcal{C}$, respectively. Set
  $$\mathcal{E}=\{f\;{\rm is}\;{\rm a}\;{\rm  deflation}\;|\;i^*f\;\in \mathcal{E}_{1}\; {\rm and}\; j^*f\; \in \mathcal{E}_{2}\}$$
and
$$\mathcal{M}=\{g\;{\rm is}\;{\rm a}\;{\rm  deflation}\;|\;i^!g\;\in \mathcal{M}_{1}\; {\rm and}\; j^*g\; \in \mathcal{M}_{2}\}.$$
 If $i^*,i^!$ are exact and $i^!$ preserves projective objects, then $(\mathcal{E}, \mathcal{M})$ is a deflation factorization system in $\mathcal{B}$.
\end{theorem}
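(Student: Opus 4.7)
The plan is to mirror the proof of the inflation analogue, using the deflation bijection between $s$-torsion pairs and deflation factorization systems together with Theorem~\ref{gs} for gluing. Since all the heavy lifting is done in these two results, the present theorem should reduce to checking that two descriptions of the same classes of deflations coincide.

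First I would convert the given data to $s$-torsion pairs by applying the deflation analogue of Theorem~\ref{main}, which yields $s$-torsion pairs $({\rm Cocone}\,\mathcal{E}_1,{\rm Cocone}\,\mathcal{M}_1)$ in $\mathcal{A}$ and $({\rm Cocone}\,\mathcal{E}_2,{\rm Cocone}\,\mathcal{M}_2)$ in $\mathcal{C}$. Under the hypotheses on $i^*$ and $i^!$, Theorem~\ref{gs} glues these into an $s$-torsion pair $(\mathcal{T},\mathcal{F})$ in $\mathcal{B}$, where
$$\mathcal{T}=\{B\in\mathcal{B}\mid i^*B\in{\rm Cocone}\,\mathcal{E}_1,\ j^*B\in{\rm Cocone}\,\mathcal{E}_2\}$$
and
$$\mathcal{F}=\{B\in\mathcal{B}\mid i^!B\in{\rm Cocone}\,\mathcal{M}_1,\ j^*B\in{\rm Cocone}\,\mathcal{M}_2\}.$$
Applying the deflation bijection once more produces a deflation factorization system $(\Defl\mathcal{T},\Defl\mathcal{F})$ in $\mathcal{B}$.

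The main step is to identify $\Defl\mathcal{T}=\mathcal{E}$ and $\Defl\mathcal{F}=\mathcal{M}$. Since $i^*$ and $j^*$ are exact by the definition of a recollement and $i^!$ is exact by assumption, each of them sends deflations to deflations and respects cocones, so that ${\rm cocone}(i^*f)\cong i^*{\rm cocone}(f)$, and similarly for $j^*$ and $i^!$. Hence
\begin{align*}
\Defl\mathcal{T}&=\{f\text{ is a deflation in }\mathcal{B}\mid {\rm cocone}(f)\in\mathcal{T}\}\\
&=\{f\text{ is a deflation}\mid {\rm cocone}(i^*f)\in{\rm Cocone}\,\mathcal{E}_1,\ {\rm cocone}(j^*f)\in{\rm Cocone}\,\mathcal{E}_2\}.
\end{align*}
The inclusion $\mathcal{E}\subseteq\Defl\mathcal{T}$ is immediate from the definition of $\mathcal{E}$, while the reverse inclusion follows from the identity $\Defl({\rm Cocone}\,\mathcal{E}_k)=\mathcal{E}_k$ provided by the deflation bijection, which forces $i^*f\in\mathcal{E}_1$ and $j^*f\in\mathcal{E}_2$. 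The same argument with $i^!$ and $\mathcal{M}_k$ in place of $i^*$ and $\mathcal{E}_k$ gives $\Defl\mathcal{F}=\mathcal{M}$. No genuine obstacle arises; the argument is formally dual to the inflation-case proof, using that $i^!$ (rather than $i^*$) is the functor that preserves projectives when gluing the torsion-free part, while exactness of the three functors ensures cocones are preserved under pullback to $\mathcal{A}$ and $\mathcal{C}$.
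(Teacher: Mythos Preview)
Your proposal is correct and follows exactly the approach the paper intends: the paper omits the proof entirely, stating only that it is dual to the inflation case (Theorem~4.5), and your argument is precisely that dual, passing through the deflation bijection (Theorem~3.2) and the gluing Theorem~\ref{gs}. One minor inaccuracy: $i^*$ is only right exact by the definition of a recollement; its exactness here comes from the hypothesis, not the definition, but this does not affect the argument.
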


\section*{Acknowledgments}
	The work is partially supported by the National Natural Science Foundation of China (No. 12271257) and the Natural Science Foundation of Jiangsu Province of China (No. BK20240137).
%The authors are very grateful to the anonymous referee for the careful reading and helpful suggestions, which have made the paper more readable.

\end{document}